\numberwithin{equation}{section}
\newtheorem{theorem}{Theorem}[section]
\newtheorem{definition}{Definition}[section]
\newtheorem{lemma}{Lemma}[section]
\newtheorem{remark}{Remark}[section]
\newlength{\onesixth}
\newtheoremstyle{noparens}%
  {}{}%
  {\itshape}{}%
  {\bfseries}{.}%
  { }%
  {\thmname{#1}\thmnumber{ #2}\mdseries\thmnote{ #3}}
\theoremstyle{noparens}
\newtheorem{lemmaNoParens}[lemma]{Lemma}
\makeatletter \@addtoreset{equation}{section} \makeatother
\begin{document}

\date{\today}

\title{On blow-up to the one-dimensional Navier-Stokes equations with degenerate viscosity and vacuum}

\author{Yue Cao}
\address[Y. Cao]{School of Mathematics, East China University of Science and Technology, Shanghai, 200237, China.}
\email{\tt cao\_yue12@ecust.edu.cn}

\author{Yachun li}
\address[Y. Li]{School of Mathematical Sciences, MOE-LSC, and SHL-MAC, Shanghai Jiao Tong University, Shanghai 200240, P. R. China} \email{\tt ycli@sjtu.edu.cn}

\author{SHAOJUN YU}
\address[S. Yu]{School of Mathematical Sciences, Shanghai Jiao Tong University, Shanghai, 200240, China}\email{\tt edwardsmith123@sjtu.edu.cn}


\date{}
\subjclass[2010]{35B44, 35A09}
\keywords{Compressible Navier-Stokes system; Degenerate viscosity; Vacuum; Singular formation.\\
}
\maketitle

\begin{abstract}
In this paper,  we consider the Cauchy problem of the isentropic compressible Navier-Stokes equations with degenerate viscosity and vacuum in $\mathbb{R}$, where the viscosity depends on the density in a super-linear power law(i.e., $\mu(\rho)=\rho^\delta, \delta>1$). We first obtain  the local existence of the regular solution, then show that the regular solution will blow up in finite time if initial data have an isolated mass group, no matter how small and smooth the initial data are. 
It is worth mentioning that based on the transport structure of some intrinsic variables, we obtain the $L^\infty$ bound of the density, which helps to remove the restriction $\delta\leq \gamma$ in Li-Pan-Zhu\cite{Li2019On} and Huang-Wang-Zhu\cite{Huang2023Singu}.
\end{abstract}


\section{Introduction}
It is well known that the motion of viscous isentropic fluid in $\mathbb{R}$ can be controlled by isentropic Navier-Stokes equations
\begin{equation}\label{1}
\left\{
\begin{aligned}
&\rho_t+(\rho u)_x=0,\\
&(\rho u)_t+(\rho u^2)_x+p(\rho)_x=(\mu(\rho) u_x)_x,
\end{aligned}
\right.
\end{equation}
where $t \geq 0$ is the time variable, $x \in \mathbb{R}$ is the space variable, $\rho(t,x)$ is the mass density, $u(t,x) $ is the fluid velocity, $p$ is the material pressure satisfying the equation of state 
\begin{equation}
p(\rho)=\rho^\gamma
\end{equation}
 for polytropic gas, where $\gamma>1$ is the adiabatic exponent.   $\mu$  is the viscosity giving by
\begin{equation}\label{82}
\mu(\rho)= \rho^\delta, \quad \delta >1.
\end{equation}
In this paper, we consider the Cauchy problem of \eqref{1}
with initial data
\begin{equation}\label{65}
(\rho,u)|_{t=0}=(\rho_0,u_0)(x), \quad x\in \mathbb{R},
\end{equation}
and far field behavior
\begin{equation}\label{44}
(\rho, u) \rightarrow(0,0) \quad \text { as } \quad|x| \rightarrow +\infty, \quad t \geq 0.
\end{equation}

For the constant viscous fluid (i.e., $\delta=0$ in \eqref{82}), there is a lot of literature on the well-posedness of classical solutions. When $\inf_{x} \rho_0(x)>0$, the local well-posedness of  classical solutions  follows from the standard symmetric hyperbolic-parabolic structure satisfying the well-known Kawashima's condition (cf. \cite{Nash1962Le,Kawa1983Sys,   Serrin1959On}), which has been extended to a global one by Matsumura-Nishida \cite{Matsu1980The} near the nonvacuum equilibrium.
 In $\mathbb{R}$, the global well-posedness of strong solutions with arbitrarily large data in some bounded domains has been proven by Kazhikhov-Shelukhin \cite{Kazhikhov1977Unique}, and later, Kawashima-Nishida \cite{Kawashima1981Global} extended this theory to unbounded domains. 
When $\inf_{x} \rho_0(x)=0$, 
 the first main issue is the degeneracy of the time evolution operator, which makes it difficult to catch the behavior of the velocity field near the vacuum, for this case, the local-in-time well-posedness of strong solutions with vacuum was firstly solved by Cho-Choe-Kim\cite{Cho2004Unique} and Cho-Kim\cite{Cho2006On} in $\mathbb{R}^3$, where they introduced an initial compatibility condition to compensate the lack of a positive lower bound of density.
Later, Huang-Li-Xin \cite{Huang2012Glo} extended the local existence to a global one under some initial smallness assumption in $\mathbb{R}^3$. Jiu-Li-Ye \cite{Jiu2014Glo} proved the global existence of classical solution with arbitrarily large data and vacuum in $\mathbb{R}$.

For the degenerate viscous flow (i.e., $\delta> 0$ in \eqref{82}), they have received extensive attentions in recent years, especially for the case with
vacuum, where the well-posedness of classical solutions become more challenging due to the degenerate viscosity. In fact, the high order regularity estimates of the velocity in \cite{Cho2004Unique,Huang2012Glo,Jiu2014Glo} ($\delta=0$) strongly rely on the uniform ellipticity of the Lam\'e operator.
While for $\delta>0$, $\mu(\rho)$ vanish as the density function connects to vacuum continuously, thus it is difficult to adapt the approach of the constant viscosity case. A remarkable discovery of a new mathematical entropy function was made by Bresch-Desjardins \cite{Bre2003Exi}
for the viscosity satisfying some mathematical relation, 
 which provides additional regularity on some derivative of the density.
This observation was applied widely in proving the global existence of weak solutions with vacuum for \eqref{1} and some related models; see Bresch-Desjardins \cite{Bre2003Exi}, Bresch-Vasseur-Yu \cite{Bresch2022Glo}, Jiu-Xin \cite{Jiu2008The}, $\mathrm{Li}$-Xin \cite{Li2015Glo}, Mellet-Vasseur \cite{Mellet2007On}, Vasseur-Yu \cite{Vasseur2016Exi}, and so on. 
Moreover, 
when  $\delta=1$, Li-Pan-Zhu \cite{Li2017On} obtained the local existence of 2-D classical solution with far field vacuum, which also applies to the 2-D shallow water equations. 
When $1 < \delta \leq \min \left\{3, \frac{\gamma+1}{2}\right\}$, 
by making full use of the symmetrical structure of the hyperbolic operator and the weak smoothing effect of the elliptic operator, Li-Pan-Zhu \cite{Li2019On} established
  the local existence of 3-D classical solutions with arbitrarily large data and vacuum, see also Geng-Li-Zhu \cite{Geng2019Vanishing} for more related result, and Xin-Zhu \cite{Xin2021Glo} for the global existence of classical solution under some initial smallness assumptions in homogeneous Sobolev space. 
When $0<\delta<1$, 
Xin-Zhu \cite{Xin2021Well} obtained the local existence of 3-D local classical solution with far field vacuum, Cao-Li-Zhu\cite{Cao2022Glo} proved the global existence of 1-D classical solution with large initial data.   Some other interesting results and discussions can also be seen in Chen-Chen-Zhu\cite{Chen2022Vanishing}, Germain-Lefloch \cite{Germain2016Finite}, Guo-Li-Xin \cite{Guo2012Lagrange}, Lions \cite{Lions1998Math}, Luo-Xin-Zeng \cite{Luo2016Non}, Yang-Zhao \cite{Yang2002Vacuum}, Yang-Zhu \cite{Yang2002Com}, and so on.

It should be noted that, even though much important progress has been obtained for the degenerate  isentropic compressible Navier-Stokes equations,  due to the complicated mathematical structure and lack of smooth effect on solutions when vacuum appears, a lot of fundamental questions remain open, e.g., what kind of initial data may cause the formation of the singularities. We refer to Li-Pan-Zhu\cite{Li2019On} and Huang-Wang-Zhu\cite{Huang2023Singu} for the related  formation of singularities on degenerate viscous fluid.
However, their results rely on the restriction that the power of viscosity coefficient is not bigger than the adiabatic exponent, i.e. $\delta\le \gamma$. In the current paper, we remove this restriction by establishing the uniform $L^\infty$ bound of the density, see Theorem \ref{77} for details.

Now, we first introduce the definition of regular solution to Cauchy problem \eqref{1}-\eqref{44}.
\begin{definition} \label{67}{\rm (Regular solution)} Let constant $T>0$. $(\rho, u)(t,x)$ is called a regular solution to Cauchy problem \eqref{1}-\eqref{44}  in $[0, T] \times \mathbb{R}$ if $(\rho, u)(t,x)$ satisfies this problem in the sense of distributions and
\begin{equation*}
\begin{split}
 &  \rho \geq 0, \quad \rho^{\frac{\delta-1}{2}} \in C\left([0, T] ; H^2\right), \quad \rho^{\frac{\gamma-1}{2}} \in C\left([0, T] ; H^2\right);\\
&  u \in C([0, T] ; H^{s^{\prime}}) \cap L^{\infty}\left([0, T] ; H^2\right), \ \ s^{\prime} \in[1,2), \quad \rho^{\frac{\delta-1}{2}}  u_{xxx} \in L^2\left([0, T]; L^2\right);
 \end{split}
 \end{equation*}
 and $u_t+u u_x=0$ as $\rho=0$.
\end{definition}

Throughout this paper, we adopt the following simplified notations:
\begin{equation*}
\begin{aligned}
&|f|_p  =\|f\|_{L^p\left(\mathbb{R}\right)},\quad \|f\|_s=\|f\|_{H^s\left(\mathbb{R}\right)}, \quad D^{k, r} =\big\{f \in L_{l o c}^1\big(\mathbb{R}\big): |\partial_x^k f|_r<+\infty\big\}, \\
& D^k =D^{k, 2},\quad |f|_{D^{k, r}}=\|f\|_{D^{k, r}\left(\mathbb{R}\right)},\quad |f|_{D^k}=\|f\|_{D^k\left(\mathbb{R}\right)},\\
&\|(f, g)\|_X=\|f\|_X+\|g\|_X, \quad \|f\|_{X\cap Y}=\|f\|_X+\|f\|_Y.
\end{aligned}
\end{equation*}

Before introducing the blow-up results, we have the following local existence of regular solutions to Cauchy problem \eqref{1}-\eqref{44}.

\begin{theorem}\label{38} {\rm (Local existence)} Assume that
\begin{equation}\label{78}
1<\min\{\delta,\gamma \}\leq 3.
\end{equation}
 If initial data $\left(\rho_0, u_0\right)$ satisfies
\begin{equation}\label{66}
\rho_0 \geq 0, \quad\Big(\rho_0^{\frac{\gamma-1}{2}}, \rho_0^{\frac{\delta-1}{2}}, u_0\Big) \in H^2,
\end{equation}
then there exists a time $T_*>0$ and a unique regular solution $(\rho, u)(t,x)$ in $\left[0, T_*\right] \times \mathbb{R}$ to Cauchy problem \eqref{1}-\eqref{44} satisfying
\begin{equation*}
\begin{aligned}
\sup _{0 \leq t \leq T_*} & \big(\|\rho^{\frac{\gamma-1}{2}}\|_2^2+\|\rho^{\frac{\delta-1}{2}}\|_2^2+\|u\|_2^2\big)(t)  +\int_0^t |\rho^{\frac{\delta-1}{2}}  u_{xxx}|_2^2 \mathrm{d} s \leq C^0
\end{aligned}
\end{equation*}
for   positive constant $C^0=C^0( \gamma, \delta, \rho_0, u_0)$. In fact, $(\rho, u)$ satisfies Cauchy problem \eqref{1}-\eqref{44} classically in positive time $\left(0, T_*\right]$.
\end{theorem}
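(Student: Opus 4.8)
The plan is to construct the solution by a standard vanishing-vacuum-plus-linearization scheme, working with the intrinsic variables $\phi := \rho^{\frac{\gamma-1}{2}}$ and $\psi := \rho^{\frac{\delta-1}{2}}$ rather than $\rho$ itself, since these are the quantities in which the initial data and the function spaces are naturally posed. First I would rewrite the momentum equation in the non-conservative form
\begin{equation*}
u_t + u u_x + \tfrac{2}{\gamma-1}\,a_1 \phi \phi_x = \tfrac{2}{\delta-1}\,a_2 \psi \psi_x\, u_x + \psi^{\tfrac{2}{\delta-1}\cdot\delta - \tfrac{2}{\delta-1}}\,u_{xx}
\end{equation*}
(with the exponents and constants $a_1,a_2$ sorted out from $p=\rho^\gamma$ and $\mu=\rho^\delta$), and derive the transport-type equations satisfied by $\phi$ and $\psi$: each has the schematic form $\partial_t \chi + u\,\partial_x \chi + c_\chi\, \chi\, u_x = 0$ for appropriate constants $c_\phi, c_\psi$. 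The key structural point, which the authors advertise in the abstract, is that $\phi$ and $\psi$ satisfy \emph{pure transport} equations (no diffusion), so their regularity is propagated, not smoothed, and the coefficient $\psi^{2\delta/(\delta-1) - 2/(\delta-1)}$ in front of $u_{xx}$ degenerates precisely where $\psi$ vanishes.

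Next I would set up the iteration: regularize the initial density to $\rho_0^\eta = \rho_0 + \eta$ for $\eta>0$ so that it is bounded below, solve the resulting uniformly parabolic problem by a linearization/contraction argument (freezing $u$ in the transport coefficients and in the convection term, solving a linear parabolic equation for the new $u$, then updating $\phi,\psi$ along the flow), and obtain $\eta$-dependent solutions. The heart of the matter is the \emph{$\eta$-uniform a priori estimates}: I would run energy estimates on $\phi,\psi$ in $H^2$ and on $u$ in $H^2$, plus the weighted estimate $\int_0^t |\psi u_{xxx}|_2^2\,ds$ (note $\rho^{(\delta-1)/2}=\psi$ is exactly the weight in the theorem's conclusion). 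Differentiating the momentum equation twice in $x$, multiplying by $u_{xxx}$, and integrating, the top-order term is $\int \psi^{2}\cdot(\text{stuff}) u_{xxx}$-type; one uses the transport equation for $\psi$ to control $\partial_t$ of the weighted norms and to absorb the commutator terms coming from $[\partial_x^2, \psi^{m}\partial_x^2]u$. Hypothesis \eqref{78}, $1<\min\{\delta,\gamma\}\le 3$, enters here: it guarantees the exponents $\frac{2}{\delta-1}$ and $\frac{2}{\gamma-1}$ appearing when one converts back and forth between $\rho$ and $\phi,\psi$ are large enough (and the ones appearing in the viscosity coefficient are manageable enough) that all the nonlinear terms $\phi^j\psi^k$ and their derivatives can be estimated by the $H^2$ norms of $\phi,\psi$ via Sobolev embedding and Moser-type inequalities, without ever needing a lower bound on the density.

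Once the $\eta$-uniform bound $\sup_{[0,T_*]}(\|\phi\|_2^2+\|\psi\|_2^2+\|u\|_2^2) + \int_0^{T_*}|\psi u_{xxx}|_2^2\,ds \le C^0$ is in hand on a time interval $T_*$ independent of $\eta$, I would pass to the limit $\eta\to 0$: weak-$*$ compactness in the high norms plus strong compactness (Aubin--Lions, using the equations to get time-derivative bounds in negative-order spaces) in lower norms yields a limit $(\rho,u)$ that is a regular solution in the sense of Definition \ref{67}, and in particular recovers $u_t+uu_x=0$ on the vacuum set by passing to the limit in the momentum equation where $\psi\to 0$. Uniqueness I would prove directly at the level of two regular solutions: form the differences $\delta\phi,\delta\psi,\delta u$, do a basic $L^2$-type energy estimate (one derivative lower than the existence space, as is standard for such degenerate systems), and close a Gronwall inequality; the degeneracy of the viscosity is not an obstruction to uniqueness because the dangerous terms carry a factor of $\psi$ which is controlled. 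The main obstacle, and where essentially all the work lies, is the $\eta$-uniform top-order estimate for $u$ — specifically controlling the commutator and the term where derivatives land on the degenerate coefficient $\psi^{m}$ — since this is exactly the place where the constant-viscosity techniques break down and where the constraint in \eqref{78} has to be exploited; the time-derivative estimates needed for compactness and the vacuum-region limit $u_t+uu_x=0$ are comparatively routine given the a priori bounds. I also expect the construction to follow the scheme of Li-Pan-Zhu \cite{Li2019On} and Xin-Zhu \cite{Xin2021Well} closely, with the novelty concentrated in how the transport structure of $\phi,\psi$ is used — a point that will be reused in the blow-up argument and is the reason the restriction $\delta\le\gamma$ can ultimately be dropped.
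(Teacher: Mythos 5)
The paper gives no proof of Theorem \ref{38}: it simply invokes Theorem 1.2 of Geng--Li--Zhu \cite{Geng2019Vanishing} (the 3-D result) and states that the 1-D case follows with minor modifications. Your outline --- rewriting the system in the intrinsic variables $\rho^{\frac{\gamma-1}{2}},\rho^{\frac{\delta-1}{2}}$ which satisfy transport equations, regularizing the vacuum, proving $\eta$-uniform $H^2$ and weighted $\rho^{\frac{\delta-1}{2}}u_{xxx}$ estimates, then passing to the limit by compactness and proving uniqueness at a lower-order energy level --- is precisely the scheme of that cited reference (and of Li--Pan--Zhu \cite{Li2019On}), so it is consistent with the approach the paper relies on; the only caveat is that, like the paper itself, it leaves the decisive uniform top-order estimates at sketch level.
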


For the proof of Theorem \ref{38}, we refer to Geng-Li-Zhu (see Theorem 1.2 of \cite{Geng2019Vanishing}), where the  local existence of regular solution to 3-D compressible Navier-Stokes equations with degenerate viscosities and vacuum is obtained. The proof of Theorem \ref{38} follows straightforwardly from the 3-D case with some minor modifications, and we omit the proof here.

To investigate the  formation of singularities, we introduce the following definition about the ``isolated mass group''(see Xin-Yan\cite{Xin2013On})
\begin{definition}{\rm (Isolated mass group)} $\left(\rho_0, u_0\right)$ is said to have an isolated mass group $\left(A^0, B^0\right)$, if there are two bounded open intervals $A^0=(a_0,b_0)\subset [a_0,b_0] \subset B^0 \subseteq (-d,d)  \subset \mathbb{R}$ satisfying
\begin{equation}\label{86}
\displaystyle \rho_0(x)=0 	\quad \text{for} \ x \in B^0 \backslash A^0, \quad \int_{A^0} \rho_0(x) \mathrm{d} x>0;\quad \left.u_0(x)\right|_{x=a_0, b_0}=\bar{u}_0
\end{equation}
for some real constants $a_0, b_0, \bar{u}_0 \in \mathbb{R}$,   and $d>0$.
\end{definition}

Then the following theorem shows that an isolated mass group in initial data is a sufficient condition for the finite time singularity formation of the regular solutions.
\begin{theorem}\label{77}
{\rm (Blow-up by isolated mass group)} Let \eqref{78} hold. Suppose that initial data $(\rho_0(x),u_0(x))$ satisfy \eqref{66} and 
\begin{equation}\label{98}
  \rho_0\in L^1(\mathbb{R}),
\end{equation} 
and  have an isolated mass group $\left(A^0, B^0\right)$. Let $T_m$ be the maximal existence time of the regular solution $(\rho(t, x), u(t, x))$ obtained in Theorem \ref{38}. Then $(\rho(t, x), u(t, x))$ blows up in finite time, i.e., $T_m<+\infty$.
\end{theorem}

\begin{remark}
Compared with the blow-up result obtained  in \cite{clsxnk} for isentropic inviscid fluid, Theorem \ref{77} shows that even with the viscous effect, the appearance of vacuum still leads to finite time blow-up of regular solution, which implies that the viscosity is not strong enough to prevent the formation of singularity due to vacuum.
\end{remark}

\begin{remark}
  One can find the following class of initial data $(\rho_0,u_0)$ satisfying \eqref{66}-\eqref{98}:  
  \begin{equation*}
    \rho_0(x)=\left\{
    \begin{aligned}
    &0   &c\le |x| \le c+1,\\
    &(1+|x|)^{-k_1}   &else,
    \end{aligned}
    \right.
    \qquad  \quad
    u_0(x)=(1+|x|)^{-k_2}, 
  \end{equation*}
where constants $c>0$, $k_1>\max\big\{1,\frac{1}{\gamma-1},\frac{1}{\delta-1}\big\}$, $k_2>\frac12.$
\end{remark}
The rest of this paper is organized as follows. \S \ref{97} is
dedicated for the preliminary lemmas to be used later.  \S \ref{108} is devoted to proving the uniform $L^\infty$ bound of the mass density, see Lemma \ref{60}; and Theorem  \ref{77} is proved in \S \ref{110}.

\section{Preliminaries} \label{97}
This section will be devoted to present some necessary preliminary lemmas to be used later, the first one is some compactness results from the Aubin-Lions Lemma.
\begin{lemmaNoParens}
 [\cite{Simon1987compact}]\label{22}  Let $X_0, X$ and $X_1$ be three Banach spaces with $X_0 \subset X \subset X_1$. Suppose that $X_0$ is compactly embedded in $X$ and that $X$ is continuously embedded in $X_1$. Then
\begin{itemize}
\item Let $G$ be bounded in $L^p\left([0, T]; X_0\right)$ where $1 \leq p<\infty$, and $\frac{\partial G}{\partial t}$ be bounded in $L^1\left([0, T]; X_1\right)$, then $G$ is relatively compact in $L^p([0, T]; X)$.
\item Let $G$ be bounded in $L^{\infty}\left([0, T]; X_0\right)$ and $\frac{\partial G}{\partial t}$ be bounded in $L^p\left([0, T]; X_1\right)$ with $p>1$, then $G$ is relatively compact in $C([0, T]; X)$.
\end{itemize}
\end{lemmaNoParens}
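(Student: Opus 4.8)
The engine of the proof is \emph{Ehrling's interpolation inequality}: since $X_0$ embeds compactly into $X$ and $X$ embeds continuously into $X_1$, for every $\eta>0$ there is a constant $C_\eta$ such that
\[
\|v\|_X\le\eta\|v\|_{X_0}+C_\eta\|v\|_{X_1}\qquad\text{for all }v\in X_0.
\]
I would first record this by a standard contradiction argument: were it to fail, there would be $\eta_0>0$ and $v_n$ with $\|v_n\|_{X_0}=1$ and $\|v_n\|_X>\eta_0+n\|v_n\|_{X_1}$; compactness of $X_0\hookrightarrow X$ extracts $v_n\to v$ in $X$, while $\|v_n\|_{X_1}\to0$ forces $v=0$ in $X_1$, hence $v=0$ in $X$ by injectivity of $X\hookrightarrow X_1$, contradicting $\|v_n\|_X\ge\eta_0$. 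With this inequality in hand, every occurrence of the $X$-norm can be traded for a small multiple of the (bounded) $X_0$-norm plus a large multiple of the (controllable) $X_1$-norm, and the two bullets differ only in how the $X_1$-norm of time increments is controlled by $\partial_t G$.

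For the second bullet I would argue directly by the Arzel\`a--Ascoli theorem in $C([0,T];X)$. Pointwise relative compactness holds because $\{g(t):g\in G\}$ is bounded in $X_0$ (from the $L^\infty$-in-time bound) and $X_0\hookrightarrow\hookrightarrow X$. Equicontinuity follows from Ehrling together with
\[
\|g(t)-g(s)\|_{X_1}\le\int_s^t\|\partial_t g(r)\|_{X_1}\,\dif r\le|t-s|^{1-\frac1p}\,\|\partial_t g\|_{L^p([0,T];X_1)},
\]
where $p>1$ makes the H\"older exponent $1-\tfrac1p$ positive: choosing $\eta$ small and then $|t-s|$ small renders $\|g(t)-g(s)\|_X$ uniformly small. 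Arzel\`a--Ascoli then yields relative compactness in $C([0,T];X)$.

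The first bullet is the substantive case, since now $\partial_t G$ is only bounded in $L^1([0,T];X_1)$ while the target exponent $p$ may exceed $1$. Here I would use the Steklov average $g_h(t)=\tfrac1h\int_t^{t+h}g(s)\,\dif s$ (after extending $G$ to a slightly larger interval by reflection, to avoid boundary fuss) and split $g=g_h+(g-g_h)$. For each fixed $h$ the family $\{g_h:g\in G\}$ is relatively compact in $L^p$: pointwise it is bounded in $X_0$ by $h^{-1/p}$ times the $L^p([0,T];X_0)$-bound, and $\partial_t g_h=\tfrac1h\big(g(\cdot+h)-g(\cdot)\big)$ is bounded in $L^\infty([0,T];X_1)$ by $h^{-1}\|\partial_t g\|_{L^1([0,T];X_1)}$, so the Arzel\`a--Ascoli argument of the previous paragraph applies. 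It remains to show the remainder is uniformly small, and this is the crux.

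The key observation is that $g-g_h$ is controlled in $X_1$ in two complementary ways: integrating $\partial_t g$ over a window of length $h$ gives both the uniform bound $\|g-g_h\|_{L^\infty([0,T];X_1)}\le\|\partial_t g\|_{L^1([0,T];X_1)}$ and the smallness $\|g-g_h\|_{L^1([0,T];X_1)}\le h\,\|\partial_t g\|_{L^1([0,T];X_1)}$. Interpolating these via $\|\phi\|_{L^p}\le\|\phi\|_{L^1}^{1/p}\|\phi\|_{L^\infty}^{1-1/p}$ yields $\|g-g_h\|_{L^p([0,T];X_1)}\le C\,h^{1/p}\to0$ uniformly in $g\in G$. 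Feeding this and the uniform bound $M:=\sup_{g\in G}\|g\|_{L^p([0,T];X_0)}$ (note $\|g_h\|_{L^p(X_0)}\le\|g\|_{L^p(X_0)}$ by Jensen) into Ehrling gives $\sup_{g\in G}\|g-g_h\|_{L^p([0,T];X)}\le 2\eta M+C_\eta C\,h^{1/p}$, which is arbitrarily small upon first choosing $\eta$ and then $h$. Combining the finite net for the compact family $\{g_h\}$ with this uniform smallness of $g-g_h$ exhibits a finite $\varepsilon$-net for $G$, so $G$ is totally bounded, hence relatively compact, in $L^p([0,T];X)$. The main obstacle is precisely this remainder estimate: with only $L^1$ control on $\partial_t G$ a naive translation bound fails for $p>1$, and it is the uniform $L^\infty([0,T];X_1)$ bound on the averaging error --- a genuine refinement over the classical Aubin--Lions hypothesis $\partial_t G\in L^q(X_1)$ with $q>1$ --- that rescues the interpolation.
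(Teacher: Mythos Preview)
The paper does not prove this lemma at all: it is stated as a preliminary result with a citation to Simon~\cite{Simon1987compact} and invoked later as a black box, so there is no ``paper's own proof'' to compare against. What you have written is a self-contained proof of the Aubin--Lions--Simon lemma itself, which goes well beyond what the paper attempts.

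Your argument is essentially correct and follows the standard route one finds in Simon's original paper and in textbook treatments: Ehrling's inequality reduces matters to controlling the $X_1$-norm of time increments, Arzel\`a--Ascoli handles the $L^\infty$-in-time case directly via the H\"older estimate on $\int_s^t\|\partial_t g\|_{X_1}$, and for the $L^p$ case the Steklov-average decomposition $g=g_h+(g-g_h)$ together with the interpolation $\|g-g_h\|_{L^p(X_1)}\le\|g-g_h\|_{L^1(X_1)}^{1/p}\|g-g_h\|_{L^\infty(X_1)}^{1-1/p}\lesssim h^{1/p}$ closes the estimate. One minor point to tighten: the extension by reflection needs a word to confirm it preserves the uniform $L^1$ bound on $\partial_t g$ (it does, up to a factor of $2$), and the application of Arzel\`a--Ascoli to $\{g_h\}$ tacitly uses that $g_h$ is continuous into $X_0$, which follows from $g\in L^p([0,T];X_0)$. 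These are routine. The substance --- in particular your identification of the $L^\infty(X_1)$ bound on the averaging error as the device that compensates for having only $L^1$ control on $\partial_t G$ --- is exactly the refinement Simon introduced over the classical Aubin--Lions hypothesis.
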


The second one is Fatou's lemma.
\begin{lemmaNoParens}\label{89}
 Given a measure space $(X, \mathcal{F}, \nu)$ and a set $E  \in \mathcal{F}$, let $\left\{f_n\right\}$ be a sequence of $\left(\mathcal{F}, \mathcal{B}_{\mathbb{R}_{\geq 0}}\right)$-measurable nonnegative functions $f_n: E  \rightarrow[0, +\infty]$. Define the function $f: E  \rightarrow[0, +\infty]$ by setting
$$
f(s)=\liminf _{n \rightarrow \infty} f_n(s)
$$
for every $s \in E $. Then $f$ is $\left(\mathcal{F}, \mathcal{B}_{\mathbb{R}_{\geq 0}}\right)$-measurable, and
$$
\int_{E } f(s) \mathrm{d} \nu \leq \liminf _{n \rightarrow \infty} \int_{E } f_n(s) \mathrm{d} \nu .
$$
\end{lemmaNoParens}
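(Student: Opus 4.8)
The plan is to reduce the statement to the Monotone Convergence Theorem by passing through the auxiliary sequence of pointwise infima. First I would settle the measurability claim: for each $n \in \mathbb{N}$, set $g_n(s) = \inf_{k \geq n} f_k(s)$ for $s \in E$. Since each $g_n$ is an infimum of a countable family of $(\mathcal{F}, \mathcal{B}_{\mathbb{R}_{\geq 0}})$-measurable functions, it is itself $(\mathcal{F}, \mathcal{B}_{\mathbb{R}_{\geq 0}})$-measurable; consequently $f = \sup_n g_n$ is measurable as a supremum of countably many measurable functions. This is precisely the identity $\liminf_{n\to\infty} f_n = \sup_n \inf_{k \geq n} f_k$, which simultaneously gives the measurability assertion of the lemma.

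Next I would record the two structural properties of the $g_n$ that drive the argument. Each $g_n$ is nonnegative, the sequence $\{g_n\}$ is nondecreasing (enlarging $n$ shrinks the index set over which the infimum is taken, so the infimum can only increase), and $g_n \uparrow f$ pointwise on $E$ by the very definition of the limit inferior. Moreover, for every fixed $k \geq n$ one has the pointwise bound $g_n \leq f_k$, so by monotonicity of the integral $\int_E g_n \dif\nu \leq \int_E f_k \dif\nu$ for all such $k$; taking the infimum over $k \geq n$ yields $\int_E g_n \dif\nu \leq \inf_{k \geq n} \int_E f_k \dif\nu$.

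The decisive step is then to apply the Monotone Convergence Theorem to the increasing sequence $\{g_n\}$ of nonnegative measurable functions, which gives $\lim_{n \to \infty} \int_E g_n \dif\nu = \int_E f \dif\nu$. Combining this with the inequality of the previous paragraph and letting $n \to \infty$,
\[
\int_E f \dif\nu = \lim_{n \to \infty} \int_E g_n \dif\nu \leq \lim_{n \to \infty} \inf_{k \geq n} \int_E f_k \dif\nu = \liminf_{n \to \infty} \int_E f_n \dif\nu,
\]
where the last equality is again just the definition of the limit inferior of the numerical sequence $\big\{\int_E f_n \dif\nu\big\}$. This is the desired conclusion.

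I do not anticipate a serious obstacle, as this is a classical result; the only point requiring care is that the entire argument rests on the Monotone Convergence Theorem, whose applicability is guaranteed precisely by the nonnegativity hypothesis on the $f_n$. This hypothesis is what allows the infima $g_n$ to be well-defined measurable functions into $[0,+\infty]$ and what makes MCT usable with no integrability assumption imposed on the $f_n$. If one were not permitted to cite MCT, the natural route would be to establish it first via approximation by simple functions and the definition of the integral as the supremum of integrals of simple minorants.
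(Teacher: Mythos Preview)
Your argument is the standard textbook proof of Fatou's lemma and is correct. The paper, however, does not supply a proof of this statement at all: it is listed in the Preliminaries section as a classical result to be quoted, so there is no approach in the paper to compare against.
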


The third lemma is Jensen's inequality.
\begin{lemmaNoParens}[\cite{Hayk2018Alge}]\label{90}
Given a measure space $(X, \mathcal{F}, \nu)$ and $E \in \mathcal{F}$, $\nu(E)<+\infty.$ Let $g:E\rightarrow (a,b)$ is an integrable function and $F$ is a convex function on $(a,b)$, then
\begin{equation*}
  F\Big(\frac{1}{\nu(E)}\int_E g(s)\mathrm{d}\nu\Big)\leq \frac{1}{\nu(E)}\int_E F\Big(g(s)\Big) \mathrm{d}\nu.
\end{equation*} 
\end{lemmaNoParens}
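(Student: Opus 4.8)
The plan is to reduce Jensen's inequality to the \emph{supporting-line} property of convex functions followed by a single integration. First I would set $\displaystyle \phi=\tfrac{1}{\nu(E)}\int_E g(s)\,\mathrm{d}\nu$, the $\nu$-mean of $g$ over $E$; since $0<\nu(E)<+\infty$ and $g$ is integrable, $\phi$ is a finite real number. I would then verify that $\phi\in(a,b)$, so that $F(\phi)$ is defined: if $a$ is finite, then $g-a>0$ on $E$ forces $\int_E(g-a)\,\mathrm{d}\nu>0$ and hence $\phi>a$, while $\phi<b$ follows symmetrically when $b<+\infty$ (an infinite endpoint makes the corresponding bound automatic).

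The key step — and the only one carrying real content — is to produce a supporting line of $F$ at the interior point $\phi$. Because $F$ is convex on the open interval $(a,b)$, the three-chord inequality $\frac{F(\phi)-F(u)}{\phi-u}\le\frac{F(v)-F(\phi)}{v-\phi}$ for $u<\phi<v$ shows that the difference quotients are monotone, so the one-sided derivatives $F'_-(\phi)\le F'_+(\phi)$ exist and are finite. Fixing any slope $m\in[F'_-(\phi),F'_+(\phi)]$ then yields the minorization
\begin{equation*}
F(t)\ \ge\ F(\phi)+m\,(t-\phi)\qquad\text{for every }t\in(a,b).
\end{equation*}
I expect this convexity-to-supporting-line passage to be the main obstacle; everything downstream is bookkeeping.

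Finally I would evaluate this at $t=g(s)\in(a,b)$ to obtain $F(g(s))\ge F(\phi)+m(g(s)-\phi)$ pointwise on $E$. Since $F$ is continuous on $(a,b)$ by convexity, $F\circ g$ is measurable, and the right-hand side above is $\nu$-integrable (as $g$ is), so $F\circ g$ has a well-defined integral in $(-\infty,+\infty]$ and integrating over $E$ is justified:
\begin{equation*}
\int_E F\big(g(s)\big)\,\mathrm{d}\nu\ \ge\ F(\phi)\,\nu(E)+m\Big(\int_E g(s)\,\mathrm{d}\nu-\phi\,\nu(E)\Big)=F(\phi)\,\nu(E),
\end{equation*}
the bracketed term vanishing by the very definition of $\phi$. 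Dividing by $\nu(E)>0$ produces
\begin{equation*}
\frac{1}{\nu(E)}\int_E F\big(g(s)\big)\,\mathrm{d}\nu\ \ge\ F(\phi)=F\Big(\frac{1}{\nu(E)}\int_E g(s)\,\mathrm{d}\nu\Big),
\end{equation*}
which is exactly the asserted inequality. The definition of $\phi$ is precisely engineered to cancel the linear term, and the integrability of the minorant is what makes the termwise integration harmless.
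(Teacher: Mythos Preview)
Your argument is the standard supporting-line proof of Jensen's inequality and is correct. Note, however, that the paper does not supply its own proof of this lemma: it is stated in the preliminaries with a citation to \cite{Hayk2018Alge} and used as a black box later in the blow-up argument, so there is no in-paper proof to compare against. Your write-up would serve perfectly well as a self-contained justification; the only tacit assumption worth flagging is $\nu(E)>0$, which you use both to define $\phi$ and to deduce $\phi\in(a,b)$, and which is implicit in the statement since one divides by $\nu(E)$.
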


\section{The uniform $L^\infty$ bound of the density}\label{108}
 This section aims at proving the uniform $L^\infty$ bound of the mass density, i.e., Lemma \ref{60}. Let $T>0$ and $(\rho, u)$ be regular solutions to Cauchy problem \eqref{1}-\eqref{44} in $[0, T] \times \mathbb{R}$ obtained in Theorem \ref{38}. Hereinafter, we denote $C_0$ (resp., $C$) a generic positive constant depending only on $\left(\rho_0, u_0, \gamma,  \delta\right)$ (resp., $\left(C_0, T\right)$), which may be different from line to line.



\begin{lemma}\label{84} Let $(\rho, u)$ in $\left[0, T\right] \times \mathbb{R}$ be the regular solution to Cauchy problem \eqref{1}-\eqref{44} obtained in  Theorem \ref{38}. Then
\begin{equation}\label{79}
\rho \in C\left(\left[0, T\right] ; L^1\right) \quad \text { if } \quad \rho_0 \in L^1 \quad \text { additionally. }
\end{equation}
Moreover, it holds that 
\begin{equation}\label{111}
  \int_{\mathbb{R}} \rho  \mathrm{d}x = \int_{\mathbb{R}}\rho_0 \mathrm{d}x.
\end{equation}
\end{lemma}

\begin{proof} First, one proves that
\begin{equation}\label{83}
\rho \in L^{\infty}\left(\left[0, T\right] ; L^1\right) .
\end{equation}
For this purpose, let $f: \mathbb{R}^{+} \rightarrow \mathbb{R}$ satisfy
$$
f(s)= \begin{cases}1 & s \in\left[0, \frac{1}{2}\right], \\ \text { nonnegative polynomial, } & s \in\left[\frac{1}{2}, 1\right], \\ e^{-s}, & s \in[1, \infty),\end{cases}
$$
such that $f \in C^2$. Then there exists a generic constant $C_*>0$ such that $\left|f^{\prime}(s)\right| \leq$ $C_* f(s)$. Define, for any $R>1, f_R(x)=f\big(\frac{|x|}{R}\big)$. Then, by the definition of $f(x)$ and $f_R(x)$, we can obtain that
$$
|f^{'}_R(x)|\leq C|f^{'}(|x|)|\leq Cf(|x|)\leq C f_R(x),
$$
for some positive constant $C$ depending only on $C_*$ but independent of $R$.

According to Theorem \ref{38}, one can obtain that for any given $R>1$,
$$
\int_{\mathbb{R}}\big( ( |\rho_t|+|\rho_x u |+|\rho u_x| ) f_R(x)+\rho f_R(x)+|\rho u  f_R^{\prime}(x)| \big) \mathrm{d} x<\infty .
$$

Multiplying $\eqref{1}_1$ by $f_R(x)$ and integrating over $\mathbb{R}$, one has
$$
\begin{aligned}
\frac{\mathrm{d}}{\mathrm{d} t} \int_{\mathbb{R}} \rho f_R(x) \mathrm{d} x & =-\int_{\mathbb{R}}\left(\rho_x u+\rho u_x\right) f_R(x) \mathrm{d} x \\
& =\int_{\mathbb{R}} \rho u f_R^{\prime}(x) \mathrm{d} x \leq C|u|_{\infty} \int_{\mathbb{R}} \rho f_R(x) \mathrm{d} x,
\end{aligned}
$$
which, along with the Gronwall inequality, implies that
\begin{equation}\label{91}
\int_{\mathbb{R}} \rho f_R(x) \mathrm{d} x \leq C \quad \text { for } \quad t \in\left[0, T\right] .
\end{equation}

Since $\rho f_R(x) \rightarrow \rho$ almost everywhere as $R \rightarrow \infty$, then it follows from Lemma \ref{89} and \eqref{91} that
$$
\int_{\mathbb{R}} \rho \mathrm{d} x \leq \liminf _{R \rightarrow \infty} \int_{\mathbb{R}} \rho f_R(x) \mathrm{d} x \leq C \quad \text { for } \quad t \in\left[0, T\right] .
$$

Second, from \eqref{78}, without loss of generality, one can assume that $\gamma\leq 3$ and  then
\begin{equation}\label{87}
  \begin{aligned}
  |\rho|_2&=  |\rho^{\frac{\gamma-1}{2}}\rho ^{\frac{3-\gamma}{2}}|_2\leq |\rho|_\infty^{\frac{3-\gamma}{2}}|\rho^{\frac{\gamma-1}{2}}|_2\leq C,\\
  |\rho_x|_2&=|\rho^{\frac{\gamma}{2}-\frac{3}{2}}\rho_x\rho^{\frac{3}{2}-\frac{\gamma}{2}}|_2
  \leq |\rho|_\infty^{\frac{3}{2}-\frac{\gamma}{2}}|\rho^{\frac{\gamma}{2}-\frac{3}{2}}\rho_x|_2\leq C.
  \end{aligned}
\end{equation} 

Thus, according to $\eqref{1}_1$, \eqref{87}, Theorem \ref{38} and H\"older's inequality, one has
$$
\left|\rho_t\right|_1 \leq   C\left(|\rho_x|_2|u|_2+|\rho|_2\left|u_x\right|_2\right) \leq C,
$$
which, along with \eqref{83} and Lemma \ref{22}, implies that \eqref{79}. 

According to Theorem \ref{38} and \eqref{87}, one has
\begin{equation*}
|\rho u |_1+| (\rho u)_x|_1 \leq C(|\rho|_2|u|_2+|\rho_x|_2|u|_2+|\rho|_2|u_x|_2)\leq C,
\end{equation*}
which means that
\begin{equation}\label{92}
  \rho u(t,\cdot) \in W^{1,1}(\mathbb{R}).
\end{equation}

It follows from $\eqref{1}_1$, \eqref{44}, \eqref{79} and \eqref{92} that 
\begin{equation*}
\frac{\mathrm{d}}{\mathrm{d}t} \int_{\mathbb{R}} \rho  \mathrm{d}x=-\int_{\mathbb{R}} (\rho u )_x \mathrm{d}x=0,
\end{equation*}
via integrating over $[0,t]$, one can get \eqref{111}.  The proof of Lemma \ref{84} is complete. \qed 
\end{proof}

\begin{lemma}\label{60} For any $T>0$, it holds that
\begin{equation}\label{94}
|\rho(t, \cdot)|_{\infty} \leq C_0 \quad \text { for } \quad 0 \leq t \leq T.
\end{equation}
\end{lemma}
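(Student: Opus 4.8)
The plan is to exploit the transport structure of an intrinsic variable built from the density, in the spirit of the Bresch--Desjardins entropy, in order to convert the decay of the $L^1$ norm of $\rho_x$ (or of a suitable power of $\rho$) into an $L^\infty$ bound. First I would introduce the Lagrangian flow map $X(t,x)$ generated by $u$, so that $\partial_t X(t,x)=u(t,X(t,x))$, $X(0,x)=x$; along this flow the mass equation $\eqref{1}_1$ gives the familiar identity $\frac{d}{dt}\rho(t,X(t,x)) = -(\rho u_x)(t,X(t,x))$, hence $\frac{d}{dt}\log\rho = -u_x$ wherever $\rho>0$. So controlling $\rho$ in $L^\infty$ amounts to controlling $\int_0^t |u_x(s,X(s,x))|\,ds$ uniformly in $x$, which in turn will follow from an $L^\infty_t L^1_x$ bound on $u_{xx}$ together with the $L^1$ bound on $\rho$ already in hand from Lemma \ref{84} and Lemma \ref{93}.

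The key step is to produce the right intrinsic variable. I would rewrite the momentum equation $\eqref{1}_2$ in the nonconservative form $\rho(u_t+uu_x)+ (\rho^\gamma)_x = (\rho^\delta u_x)_x$ and, dividing by $\rho$ (valid in the region $\{\rho>0\}$, and harmless for the $L^\infty$ bound on $\rho$ which is attained there), observe that $\rho^{\delta-1}u_x$ and $\frac{\delta-1}{\delta}(\rho^{\delta-1})_x$ are related so that a combination such as $v := u + \frac{\delta}{\delta-1}(\rho^{\delta-1})_x/\rho \cdot(\text{const})$ — more precisely the effective velocity $v=u+\frac{1}{\delta-1}\partial_x(\rho^{\delta-1})$ type quantity familiar from the BD theory — satisfies a transport-type equation with a good pressure source term. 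Tracking $\partial_x(\rho^{\delta-1})$ along the flow and using the regular-solution regularity $\rho^{(\delta-1)/2}\in C([0,T];H^2)$ (so that $(\rho^{\delta-1})_x$ is controlled in $L^\infty_tL^2_x$) together with Lemma \ref{107} for $\int_0^t|\rho^{\delta/2}u_x|_2^2\,ds$, I expect to bound $\int_0^t|u_x(s,\cdot)|_\infty\,ds$, or at least $\sup_x\int_0^t|u_x(s,X(s,x))|\,ds$, by a constant $C_0$. Exponentiating the transport identity for $\log\rho$ then yields $|\rho(t,\cdot)|_\infty \le |\rho_0|_\infty \exp(C_0)\le C_0$.

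An alternative, and perhaps cleaner, route I would keep in reserve: use the estimates \eqref{87} to get $\rho,\rho_x\in L^\infty_tL^2_x$, hence $\rho\in L^\infty_tL^\infty_x$ is almost immediate by the one-dimensional Gagliardo--Nirenberg/Sobolev embedding $|\rho|_\infty \le C|\rho|_2^{1/2}|\rho_x|_2^{1/2}+C|\rho|_2$ — but the constants in \eqref{87} themselves depend on $|\rho|_\infty$, so this is circular as written and must be broken by a continuity (bootstrap) argument: assume $|\rho(t,\cdot)|_\infty\le 2M$ on a maximal subinterval, derive from \eqref{87} and the transport identity that in fact $|\rho(t,\cdot)|_\infty\le M$ there for $M$ depending only on $C_0$, and conclude the subinterval is all of $[0,T]$. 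I would present whichever of the two the authors' earlier lemmas make self-contained; given that Lemmas \ref{84}, \ref{107}, \ref{93} are stated precisely to feed this proof, the transport/effective-velocity argument closing with Gronwall is the natural one.

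The main obstacle I anticipate is the division by $\rho$ near the vacuum: the effective velocity and the identity $\frac{d}{dt}\log\rho=-u_x$ are only meaningful where $\rho>0$, and one must argue that the supremum of $\rho$ over $\mathbb{R}$ is controlled by its evolution on the (open) set where it is positive, handling the continuous vanishing at the vacuum interface without picking up a bad boundary term. The regular-solution framework — in particular $\rho^{(\gamma-1)/2},\rho^{(\delta-1)/2}\in C([0,T];H^2)$ and the stipulation $u_t+uu_x=0$ on $\{\rho=0\}$ — is exactly what makes this rigorous, so the care is in invoking it correctly rather than in any hard new estimate.
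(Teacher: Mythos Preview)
Your proposal has a genuine gap: neither route produces a bound that is independent of $T$, which is the whole point of the lemma (the constant is $C_0$, not $C$). In your main route you reduce to controlling $\sup_x\int_0^t|u_x(s,X(s,x))|\,ds$ (or $\int_0^t|u_x|_\infty\,ds$) by a constant depending only on the data. But the only $T$-uniform inputs available at this stage are mass conservation (Lemma~\ref{93}) and the energy identity (Lemma~\ref{107}), giving merely $|\rho|_1\le C_0$ and $\int_0^T\!\!\int \rho^\delta u_x^2\,dx\,ds\le C_0$; these are far too weak to yield $L^1_tL^\infty_x$ control on $u_x$. The BD effective velocity $v=u+\tfrac{1}{\delta-1}(\rho^{\delta-1})_x$ that you invoke gives at best $L^\infty_tL^2_x$ control on $\sqrt{\rho}\,v$, which again does not bound $u_x$ pointwise. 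Your alternative bootstrap via \eqref{87} and Gagliardo--Nirenberg is not only circular (as you note) but also feeds on the regular-solution norms from Theorem~\ref{38}, which are not asserted to be $T$-independent; even if the continuity argument closed, it would deliver $|\rho|_\infty\le C(T)$, not $C_0$.

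The idea you are missing is to work one antiderivative up. The paper integrates the momentum equation over $(-\infty,x]$ and sets $\xi(t,x)=\int_{-\infty}^x\rho u\,dz$, obtaining $\xi_t+u\xi_x+\rho^\gamma=\rho^\delta u_x$. Since the continuity equation gives $\rho^\delta u_x=-\rho^{\delta-1}(\rho_t+u\rho_x)=-\tfrac{1}{\delta}\big((\rho^\delta)_t+u(\rho^\delta)_x\big)$, the quantity $\xi+\tfrac{1}{\delta}\rho^\delta$ satisfies $\frac{d}{dt}\big(\xi+\tfrac{1}{\delta}\rho^\delta\big)(t,y(t;x))=-\rho^\gamma\le 0$ along particle paths. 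Thus $\tfrac{1}{\delta}\rho^\delta\le \xi_0+\tfrac{1}{\delta}\rho_0^\delta-\xi$, and $|\xi|\le|\sqrt{\rho}u|_2|\rho|_1^{1/2}\le C_0$ by Lemmas~\ref{107} and~\ref{93}. This gives $|\rho^\delta|_\infty\le C_0$ with no $T$-dependence and no division by $\rho$ near vacuum. Note that $\partial_x\big(\xi+\tfrac{1}{\delta}\rho^\delta\big)=\rho\big(u+\tfrac{1}{\delta-1}(\rho^{\delta-1})_x\big)$, so the paper's transported scalar is precisely the spatial primitive of $\rho$ times your BD effective velocity; the right object is its antiderivative, which enjoys a clean monotone transport, rather than $u_x$ itself.
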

\begin{proof}
First, integrating $\eqref{1}_2$ over $(-\infty, x]$ with respect to $x$, one has
\begin{equation}\label{61}
\frac{\mathrm{d}}{\mathrm{d} t} \int_{-\infty}^x \rho u(t, z) \mathrm{d} z+\rho u^2+ \rho^\gamma= \rho^\delta u_x,
\end{equation}
where one has used the far field behavior $\eqref{44}$.
Denote $\xi(t, x)=\int_{-\infty}^x \rho u(t, z) \mathrm{d} z$. Then it follows from $\eqref{1}_1 $  and \eqref{61} that
\begin{equation}\label{62}
\xi_t+u \xi_x+ \rho^{\delta-1}\left(\rho_t+u \rho_x\right)+ \rho^\gamma=0.
\end{equation}

Second, let $y(t ; x)$ be the solution of the problem
\begin{equation*}
\left\{\begin{array}{l}
\frac{\mathrm{d} y(t ; x)}{\mathrm{d} t}=u(t, y(t ; x)), \\
y(0 ; x)=x.
\end{array}\right.
\end{equation*}
It follows from \eqref{62} that
\begin{equation*}
\frac{\mathrm{d}}{\mathrm{d} t}\left(\xi+\frac{1}{\delta} \rho^\delta\right)(t, y(t ; x)) \leq 0,
\end{equation*}
which implies
\begin{equation}\label{64}
\left(\xi+\frac{1}{\delta} \rho^\delta\right)(t, y(t ; x)) \leq \xi(0, y(0 ; x))+\frac{1}{\delta} \rho_0^\delta(x).
\end{equation}

To get the uniform $L^\infty$ bound of $\rho,$ now we need to estimate $\xi(t,y(t;x))$. It follows from $\eqref{1}_1$ that
\begin{equation}\label{75}
\frac{\rho^{\gamma}_t}{\gamma-1}+\frac{(\rho^{\gamma} u)_x}{\gamma-1}+\rho^{\gamma} u_x=0 .
\end{equation}
Then multiplying $\eqref{1}_2$ by $u$, adding the resulting equation to \eqref{75}, and integrating over $\mathbb{R}$, one arrives at
\begin{equation}\label{76}
\frac{\mathrm{d}}{\mathrm{d} t} \int_{\mathbb{R}}\left(\frac{1}{2} \rho u^2+\frac{\rho^{\gamma}}{\gamma-1}\right) \mathrm{d} x+ \int_{\mathbb{R}} \rho^\delta u_x^2 \mathrm{~d} x=0,
\end{equation}
where the far field behavior $\eqref{44}$ has been used. From \eqref{98}, one has 
\begin{equation}
\begin{aligned}
  &\int_{\mathbb{R}} \Big(\frac{1}{2} \rho_0 u_0^2+\frac{\rho_0^\gamma}{\gamma-1}\Big) \mathrm{d}x\leq C|\rho_0|_\infty|u_0|_2^2+C|\rho_0|_\infty \int_{\mathbb{R}} \rho_0^{\gamma-1}\mathrm{d}x\leq C_0.\\
\end{aligned}
\end{equation}
Then, integrating \eqref{76} over $[0, t]$, one can get that
\begin{equation}\label{112}
\int_{\mathbb{R}}\left(\frac{1}{2} \rho u^2+\frac{\rho^{\gamma}}{\gamma-1} \right)  \mathrm{d} x+ \int_0^t \int_{\mathbb{R}} \rho^\delta u_x^2 \mathrm{~d} x \mathrm{~d} s \leq C_0 \quad \text { for } \quad 0 \leq t \leq T.
\end{equation}
From \eqref{111} and \eqref{112}, we have 
\begin{equation}\label{63}
\xi(t, y(t ; x)) \leq|\sqrt{\rho} u|_2|\rho|_1^{\frac{1}{2}} \leq C_0 \quad \text { for } \quad 0 \leq t \leq T.
\end{equation} 

Then it follows from  \eqref{64} and \eqref{63} that
\begin{equation*}
|\rho^\delta(t, \cdot)|_{\infty} \leq C_0 \quad \text { for } \quad 0 \leq t \leq T.
\end{equation*}
The proof of Lemma \ref{60} is complete. \qed
\end{proof}
\section{Singularity Formation}\label{110}
Some necessary definitions are provided in order to demonstrate Theorem \ref{77}.
\begin{definition} (Particle path and flow map) Let $x\left(t ; x_0\right)$ be the particle path starting from $x_0$ at $t=0$, i.e.,
\begin{equation}\label{41}
\frac{\mathrm{d}}{\mathrm{d} t} x\left(t ; x_0\right)=u(t, x(t ; x_0)), \quad x\left(0 ; x_0\right)=x_0 .
\end{equation}
Let $A(t), B(t), B(t) \backslash A(t)$ be the images of $A^0, B^0$, and $B^0 \backslash A^0$, respectively, under the flow map of \eqref{41}, i.e.,
\begin{equation*}
\begin{aligned}
A(t) & =\left\{x\left(t ; x_0\right) \mid x_0 \in A^0\right\}\triangleq(a(t),b(t)), \\
B(t) & =\left\{x\left(t ; x_0\right) \mid x_0 \in B^0\right\}, \\
B(t) \backslash A(t) & =\left\{x\left(t ; x_0\right) \mid x_0 \in B^0 \backslash A^0\right\} .
\end{aligned}
\end{equation*}
\end{definition}

In the rest part of this paper, we will use the following useful physical quantities on the fluids in $A(t)$:
\begin{equation*}
\begin{aligned}
m(t) & =\int_{A(t)} \rho(t,x) \mathrm{d} x \quad \text { (total mass), } \\
M(t) & =\int_{A(t)} \rho(t,x)x^2 \mathrm{~d} x \quad \text { (second moment), } \\
F(t) & =\int_{A(t)} \rho(t,x) u(t,x)  x \mathrm{~d} x \quad \text { (radial component of momentum), } \\
\varepsilon(t) & =\int_{A(t)}\left(\frac{1}{2} \rho u^2+\frac{p}{\gamma-1}\right)(t,x) \mathrm{d} x \quad \text { (total energy). }
\end{aligned}
\end{equation*}

The following lemma confirms the invariance of the volume $|A(t)|$ for regular solutions.

\begin{lemma}\label{53} Suppose that the initial data $\left(\rho_0, u_0\right)$ have an isolated mass group $\left(A^0, B^0\right)$. Then for the regular solution $(\rho, u)$ on $ \left[0, T_m\right)\times \mathbb{R}$ to Cauchy problem \eqref{1}-\eqref{44}, we have
\begin{equation*}
|A(t)|=\left|A^0\right|, \quad t \in\left[0, T_m\right) .
\end{equation*}
\end{lemma}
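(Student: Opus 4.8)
The plan is to show that the flow map $x_0\mapsto x(t;x_0)$ transports the endpoints $a_0,b_0$ of $A^0$ in such a way that the length $b(t)-a(t)$ is conserved, by exploiting the fact that the vacuum region $B^0\setminus A^0$ surrounding $A^0$ stays vacuum under the flow. First I would recall that in the vacuum set the regular solution satisfies, by Definition \ref{67}, the reduced equation $u_t+uu_x=0$ wherever $\rho=0$. Since $\rho_0\equiv 0$ on $B^0\setminus A^0$ and the continuity equation $\rho_t+(\rho u)_x=0$ has $\rho\equiv 0$ along particle paths as a solution (uniqueness of the transported density), the density remains zero on $B(t)\setminus A(t)$ for all $t\in[0,T_m)$; in particular the two components of $B(t)\setminus A(t)$ adjacent to $A(t)$ are genuine vacuum collars separating $A(t)$ from the rest of the fluid, so $a(t)=x(t;a_0)$ and $b(t)=x(t;b_0)$ are well-defined and the interval $A(t)=(a(t),b(t))$ carries all the mass $m(t)=m(0)>0$.

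Next I would compute $\frac{d}{dt}|A(t)|=\frac{d}{dt}\big(b(t)-a(t)\big)=u(t,b(t))-u(t,a(t))$ using \eqref{41}. The key identity to establish is that $u(t,b(t))-u(t,a(t))=0$ for all $t$, i.e. the two endpoints move with the same velocity. The natural way to see this is to integrate the momentum equation $\eqref{1}_2$ across the vacuum collar just to the left of $A(t)$ (and similarly to the right). On that collar $\rho\equiv 0$, so $p(\rho)=\rho^\gamma\equiv 0$ and $\mu(\rho)=\rho^\delta\equiv 0$, hence along the reduced dynamics $u_t+uu_x=0$ holds there, which means $u$ is constant along particle paths inside the collar; since the collar's left boundary is the point $x(t;a_0^{-})$ (the left endpoint of $B^0$) and $u_0$ equals the constant $\bar u_0$ at $x=a_0$ by \eqref{86}, one propagates this: $u(t,a(t))=u(t,x(t;a_0))=u_0(a_0)=\bar u_0$, and likewise $u(t,b(t))=u_0(b_0)=\bar u_0$. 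Therefore $u(t,a(t))=u(t,b(t))=\bar u_0$ and $\frac{d}{dt}|A(t)|=0$.

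An alternative, perhaps cleaner, route avoiding any appeal to the structure of $u$ on the collar: since $B(t)\setminus A(t)$ stays vacuum and the whole configuration of mass stays inside $A(t)$, one can directly argue that $A^0$ is a connected component of the support-complement's complement transported by the (Lipschitz in $x$, by the $H^2$ regularity of $u$) flow map, and the flow map is a homeomorphism of $\mathbb{R}$ for each fixed $t$, so it maps the interval $(a_0,b_0)$ to the interval $(a(t),b(t))$. Combined with the reduced equation giving that endpoints of vacuum collars travel at the common fluid velocity $\bar u_0$ (the only value consistent with $u$ being continuous across $a_0,b_0$ and constant along the collar characteristics emanating from $\partial A^0$), the length is preserved.

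The main obstacle I anticipate is making rigorous the claim that $u(t,\cdot)$ restricted to a vacuum collar is constant along particle paths and in particular equal to $\bar u_0$ at the endpoints of $A(t)$ — this requires carefully invoking the ``$u_t+uu_x=0$ as $\rho=0$'' clause of Definition \ref{67} together with the regularity $u\in C([0,T];H^{s'})$, $s'\in[1,2)$ (so $u$ is $C^1$ in $x$ and the characteristics are well-defined and non-crossing), and arguing that the vacuum region is genuinely invariant under the flow (which in turn uses uniqueness for the transport equation satisfied by $\rho$). Once that structural fact is in hand, the computation $\frac{d}{dt}(b(t)-a(t))=u(t,b(t))-u(t,a(t))=\bar u_0-\bar u_0=0$ and integration in $t$ finish the proof. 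I would also note in passing that Lemma \ref{84} and Lemma \ref{93} guarantee $m(t)$ is a well-defined positive constant, which is what makes $A(t)$ a nontrivial interval for all $t\in[0,T_m)$.
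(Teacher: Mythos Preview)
Your proposal is correct and follows essentially the same route as the paper: show $\rho\equiv 0$ on $B(t)\setminus A(t)$ via the characteristic representation of the continuity equation, invoke the clause $u_t+uu_x=0$ where $\rho=0$ from Definition~\ref{67} to conclude $u$ is constant along the particle paths emanating from $a_0,b_0\in B^0\setminus A^0$, and then compute $\frac{d}{dt}(b(t)-a(t))=u(t,b(t))-u(t,a(t))=\bar u_0-\bar u_0=0$. The only superfluous elements are the aside about ``integrating the momentum equation across the collar'' (you never actually need this---the reduced Burgers dynamics suffices) and the closing remark about $m(t)$ via Lemmas~\ref{84} and~\ref{93}, which belongs to Lemma~\ref{54} rather than the present statement.
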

\begin{proof} Since
\begin{equation*}
\rho\left(x\left(t ; x_0\right), t\right)=\rho_0\left(x_0\right) \exp \left(\int_0^t  u_x\left(s, x\left(s ; x_0\right) \right) \mathrm{d} s\right)
\end{equation*}
for any $x_0\in \mathbb{R}$, thus
\begin{equation}\label{85}
\rho \equiv 0 \quad \text { in } \quad B(t) \backslash A(t).
\end{equation}
According to the definition of regular solutions, one has
\begin{equation*}
u_t+u u_x=0 \quad \text { in } \quad B(t) \backslash A(t).
\end{equation*}
Therefore, \eqref{85} shows that $u$ is invariant along the particle path $x\left(t ; x_0\right)$ with $x_0 \in B^0 \backslash A^0$.

For any $A^0=(a_0,b_0)$, define
\begin{equation*}
\begin{aligned}
&\frac{\mathrm{d}}{\mathrm{d} t} x^1\left(t ;a_0\right)=u\left(t,x^1\left(t ; a_0\right) \right), \quad x^1\left(0 ; a_0\right)=a_0;\\
&\frac{\mathrm{d}}{\mathrm{d} t} x^2\left(t ;b_0\right)=u\left(t,x^2\left(t ; b_0\right) \right), \quad x^2\left(0 ; b_0\right)=b_0.\\
\end{aligned}
\end{equation*}
Thus
\begin{equation*}
\begin{split}
&\frac{\mathrm{d}}{\mathrm{d} t}\left(x^1\left(t ; a_0\right)-x^2\left(t ;b_0\right)\right)
=u\left(t,x^1\left(t ;a_0\right) \right)-u\left(t, x^2\left(t ; b_0\right) \right)
=\bar{u}_0-\bar{u}_0=0,
\end{split}
\end{equation*}
which implies that
\begin{equation*}
|A(t)|=\left|A^0\right|, \quad t \in\left[0, T_m\right).  
\end{equation*} \qed
\end{proof}

We point out that, although the volume of $A(t)$ is invariant, the vacuum boundary $\partial A(t)$ travels with constant velocity $\bar{u}_0$. The following well-known Reynolds transport theorem (c.f. Appendix C of \cite{Evans2010Partial}) is useful.
\begin{lemma}\label{43} For any $G(t,x) \in C^1\left( \mathbb{R}^{+}\times \mathbb{R}^d\right)$, one has
\begin{equation*}
\frac{\mathrm{d}}{\mathrm{d} t} \int_{A(t)} G(t,x) \mathrm{d} x=\int_{A(t)} G_t(t,x) \mathrm{d} x+\int_{\partial A(t)}G(t,x)(u(t,x)\cdot \vec{n})\mathrm{d}S,
\end{equation*}
where $\vec{n}$ is the outward unit normal vector to $\partial A(t)$ and $u$ is the velocity of the fluid.
\end{lemma}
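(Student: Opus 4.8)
The plan is to reduce the moving-domain integral to an integral over the fixed reference domain $A^0$ by passing to Lagrangian coordinates, where the time derivative can be moved under the integral sign. Recall from \eqref{41} that the flow map $x_0 \mapsto x(t;x_0)$ carries $A^0$ onto $A(t)$. Denoting by $J(t;x_0)=\det\!\big(\partial x(t;x_0)/\partial x_0\big)$ the Jacobian of this map, the change-of-variables formula gives
\begin{equation*}
\int_{A(t)} G(t,x)\,\mathrm{d}x=\int_{A^0} G\big(t,x(t;x_0)\big)\,J(t;x_0)\,\mathrm{d}x_0,
\end{equation*}
so that the left-hand side of the claimed identity becomes the time derivative of an integral over the \emph{fixed} set $A^0$, a setting in which differentiation and integration may be interchanged.

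Next I would establish Liouville's formula $\partial_t J(t;x_0)=(\dd u)\big(t,x(t;x_0)\big)\,J(t;x_0)$, which follows by differentiating the determinant along the flow and using \eqref{41}; since $u_x$ is bounded on $A(t)$ by Theorem \ref{38}, $J$ stays positive and finite, so the flow remains a $C^1$ diffeomorphism on $[0,T]\times A^0$. Differentiating under the integral sign and applying the chain rule then yields
\begin{equation*}
\frac{\mathrm{d}}{\mathrm{d}t}\int_{A^0} G\,J\,\mathrm{d}x_0=\int_{A^0}\Big[\big(G_t+u\cdot\nabla G\big)J+G\,\partial_t J\Big]\,\mathrm{d}x_0=\int_{A^0}\big[G_t+\dd(G u)\big]J\,\mathrm{d}x_0,
\end{equation*}
where in the last step I substituted Liouville's formula and used $u\cdot\nabla G+G\,\dd u=\dd(Gu)$.

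Finally I would transport back to Eulerian coordinates, undoing the change of variables, to obtain
\begin{equation*}
\frac{\mathrm{d}}{\mathrm{d}t}\int_{A(t)} G\,\mathrm{d}x=\int_{A(t)}\big[G_t+\dd(Gu)\big]\,\mathrm{d}x,
\end{equation*}
and then apply the divergence theorem to the term $\dd(Gu)$ over $A(t)$, which produces exactly the boundary flux $\int_{\partial A(t)} G\,(u\cdot\vec n)\,\mathrm{d}S$ and leaves $\int_{A(t)} G_t\,\mathrm{d}x$. In the one-dimensional setting relevant here, $A(t)=(a(t),b(t))$ and the boundary integral collapses to the two endpoint contributions of $Gu$ at $b(t)$ minus at $a(t)$, consistent with $\vec n=\pm 1$ and with the endpoint velocities $u(t,a(t))$, $u(t,b(t))$.

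The main obstacle is the rigorous justification of the two analytic steps—differentiation under the integral and the validity of Liouville's formula—which both hinge on the flow map being $C^1$ with nonvanishing Jacobian on $[0,T]\times\overline{A^0}$. This is where the regularity afforded by Theorem \ref{38} is essential: the control on $u_x$ (equivalently $\dd u$) keeps $J$ uniformly bounded away from $0$ and $\infty$, so the Eulerian and Lagrangian descriptions are genuinely equivalent and no loss of regularity occurs when we integrate by parts. Everything else is the standard bookkeeping behind the Reynolds transport theorem.
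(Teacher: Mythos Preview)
Your argument is correct and is exactly the standard Lagrangian derivation of the Reynolds transport theorem: pull back to the reference domain $A^0$ via the flow map, use Liouville's formula $\partial_t J=(\dd u)J$ to differentiate under the integral, and convert $\dd(Gu)$ to the boundary flux by the divergence theorem. The regularity caveats you flag (control on $u_x$ keeping $J$ bounded and bounded away from zero) are the right ones.

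There is, however, nothing to compare against: the paper does not prove this lemma at all. It merely states it as the ``well-known Reynolds transport theorem'' and cites Appendix~C of Evans \cite{Evans2010Partial}. So your write-up is not an alternative to the paper's proof but rather a self-contained replacement for an external reference. If anything, your version is more tailored to the present setting, since you explicitly invoke the $u_x$ control from Theorem~\ref{38} to justify that the flow map remains a $C^1$ diffeomorphism on $[0,T]\times\overline{A^0}$, whereas the paper leaves this implicit.
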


The mass is conserved according to the following lemma.
\begin{lemma}\label{54} Suppose that the initial data $\left(\rho_0, u_0\right)(x)$ have an isolated mass group $\left(A^0, B^0\right)$, then for the regular solution $(\rho, u)(t, x)$ on $\left[0, T_m\right) \times \mathbb{R}$ to Cauchy problem \eqref{1}-\eqref{44}, we have
\begin{equation*}
m(t)=m(0)\quad \text { for } \ \ t \in\left[0, T_m\right).
\end{equation*}
\end{lemma}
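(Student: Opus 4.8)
The plan is to apply the Reynolds transport theorem (Lemma \ref{43}) to $G = \rho$ over the moving domain $A(t)$, and then exploit the two structural facts already established: that $\rho$ satisfies the continuity equation $\eqref{1}_1$, and that the boundary $\partial A(t)$ moves with the fluid velocity (which on $\partial A(t)$ equals the constant $\bar u_0$, by the computation in the proof of Lemma \ref{53}). First I would write
\begin{equation*}
\frac{\mathrm d}{\mathrm d t} m(t) = \frac{\mathrm d}{\mathrm d t}\int_{A(t)} \rho(t,x)\,\mathrm d x = \int_{A(t)} \rho_t(t,x)\,\mathrm d x + \int_{\partial A(t)} \rho(t,x)\big(u(t,x)\cdot \vec n\big)\,\mathrm d S .
\end{equation*}
Using $\rho_t = -(\rho u)_x$ from $\eqref{1}_1$ and the divergence theorem on the interval $A(t) = (a(t), b(t))$, the first term becomes $-\int_{\partial A(t)} \rho u \cdot \vec n\,\mathrm d S$, which exactly cancels the boundary term, giving $\frac{\mathrm d}{\mathrm d t} m(t) = 0$. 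Integrating in time over $[0,t]$ then yields $m(t) = m(0)$.

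There is one technical wrinkle: Lemma \ref{43} is stated for $G \in C^1(\mathbb{R}^+ \times \mathbb{R}^d)$, whereas the regular solution only guarantees the regularity listed in Definition \ref{67}, and $\rho$ need not be $C^1$ globally (though by Theorem \ref{38} the solution is classical in positive time). To handle this cleanly I would argue on the Lagrangian side instead, which avoids the boundary-flux bookkeeping altogether: since $\partial A(t)$ is transported by the flow map $x(t;x_0)$, a change of variables to Lagrangian coordinates gives $m(t) = \int_{A^0} \rho(t, x(t;x_0))\, J(t;x_0)\,\mathrm d x_0$ where $J = \partial x/\partial x_0$ is the Jacobian. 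The continuity equation is equivalent to $\partial_t\big(\rho(t,x(t;x_0)) J(t;x_0)\big) = 0$ — indeed the representation formula $\rho(x(t;x_0),t) = \rho_0(x_0)\exp\big(\int_0^t u_x\,\mathrm d s\big)$ already appears in the proof of Lemma \ref{53}, and $J(t;x_0) = \exp\big(-\int_0^t u_x\,\mathrm d s\big)$ by Liouville's formula — so the integrand is constant in $t$, whence $m(t) = \int_{A^0}\rho_0(x_0)\,\mathrm d x_0 = m(0)$.

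I expect the main (mild) obstacle to be exactly this regularity/justification point: making the Reynolds transport argument or the Lagrangian change of variables rigorous given that $\rho$ is only guaranteed the Sobolev regularity of Definition \ref{67} near $t=0$ and classical regularity only for $t>0$. In practice this is resolved by working on $[\varepsilon, t]$ for $\varepsilon > 0$ where everything is classical and then letting $\varepsilon \to 0^+$ using the continuity $\rho \in C([0,T];L^1)$ from Lemma \ref{84} (which applies since $\rho_0 \in L^1$ is assumed in Theorem \ref{77}). Everything else is routine: the cancellation of the interior term against the boundary flux is immediate once the continuity equation is inserted, and no estimate beyond those already in Section \ref{108} is needed.
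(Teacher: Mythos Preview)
Your primary argument---apply the Reynolds transport theorem (Lemma \ref{43}) to $G=\rho$, substitute $\rho_t=-(\rho u)_x$ from $\eqref{1}_1$, and observe that the resulting boundary term cancels the transport flux---is exactly what the paper does, in essentially the same two lines. The paper does not pause over the $C^1$ hypothesis of Lemma \ref{43} or offer the Lagrangian alternative, so your regularity discussion and the second argument are extra care beyond what the paper provides (note, incidentally, that Liouville's formula gives $J=\exp\big(+\int_0^t u_x\,\mathrm ds\big)$, which pairs with the \emph{correct} Lagrangian density formula $\rho=\rho_0\exp\big(-\int_0^t u_x\,\mathrm ds\big)$; the sign in the display inside the proof of Lemma \ref{53} is off, but the conclusion $\rho J=\rho_0$ is unaffected).
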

\begin{proof}
From $\eqref{1}_1$, $\eqref{85}$ and Lemma \ref{43}, direct computation shows that
\begin{equation*}
\begin{aligned}
\frac{\mathrm{d}}{\mathrm{d} t} m(t) & =\int_{a(t)}^{b(t)} \rho_t \mathrm{~d} x+ \rho(t,b(t))u(t,b(t))- \rho(t,a(t))u(t,a(t)) \\
& =\int_{a(t)}^{b(t)}-(\rho u)_x \mathrm{d} x=-\rho u|_{a(t)}^{b(t)}=0,
\end{aligned}
\end{equation*}
which implies that $m(t)=m(0).$ \qed
\end{proof}
Motivated by \cite{Xin1998Blow}, we define the functional
\begin{equation*}
\begin{aligned}
I(t) & =M(t)-2(t+1) F(t)+2(t+1)^2 \varepsilon(t) \\
& =\int_{A(t)}\big(x-(t+1) u\big)^2 \rho \mathrm{d} x+\frac{2}{\gamma-1}(t+1)^2 \int_{A(t)} p \mathrm{~d} x .
\end{aligned}
\end{equation*}

We now follow the arguments of \cite{Xin1998Blow} to prove Theorem \ref{77}, where a key observation is that the viscous force $\mu(\rho)u_x=\rho^{\delta}u_x=0$ in the vacuum region for the degenerate fluid.

\noindent \textbf{Proof of Theorem \ref{77}.}
First, it is easy to find that
\begin{equation}\label{bdc}
\rho^{\delta}u_x|_{\partial A(t)}=0.
\end{equation}

Multiplying $\eqref{1}_2$ by $u$, one has 
\begin{equation}\label{95}
\rho u u_t+\rho u^2u_x+up_x=\rho^{\delta}uu_x.
\end{equation}
From \eqref{95} and using the fact that
$$
\frac{1}{2}(\rho u^2)_t=\rho u u_t-\frac{1}{2}u^3\rho_x-\frac{1}{2}\rho u^2u_x,
$$
we obtain that 
\begin{equation}\label{96}
\frac{1}{2}(\rho u^2)_t+\frac{1}{2}(\rho u^3)_x+up_x=u(\rho^{\delta} u_x)_x.
\end{equation}
Then, adding \eqref{75} and \eqref{96}, one has 
\begin{equation}\label{45}
\left(\frac{1}{2} \rho u^2+\frac{p}{\gamma-1}\right)_t=-\left(\frac{1}{2} \rho u^3 \right)_x-\frac{\gamma}{\gamma-1} (p u)_x+u (\rho^{\delta}u_x)_x.
\end{equation}
Via $\eqref{1}_1$, $\eqref{1}_2$,   \eqref{45}, Lemma \ref{43}, and integrating by parts, one has
\begin{equation}\label{50}
\begin{aligned}
\frac{\mathrm{d}}{\mathrm{d} t} I(t)  =&\frac{\mathrm{d}}{\mathrm{d} t} M(t)-2(t+1) \frac{\mathrm{d}}{\mathrm{d} t} F(t)+2(t+1)^2 \frac{\mathrm{d}}{\mathrm{d} t} \varepsilon(t)-2 F(t)+4(t+1) \varepsilon(t) \\
 =&\frac{2}{\gamma-1}(3-\gamma)(t+1) \int_{A(t)} p \mathrm{~d} x-2(t+1)\int_{A(t)} x (\rho^{\delta}u_x)_x \mathrm{d}x\\
&+2(t+1)^2\int_{A(t)} u (\rho^{\delta}u_x)_x \mathrm{d}x\\
=&\frac{2}{\gamma-1}(3-\gamma)(t+1) \int_{A(t)} p \mathrm{~d} x-2(t+1)\int_{A(t)} \big((x \rho^{\delta} u_x)_x-\rho^\delta u_x\big) \mathrm{d}x\\
&+2(t+1)^2\int_{A(t)}\big((\rho^{\delta} u u_x)_x-\rho^\delta(u_x)^2\big) \mathrm{d}x\\
=&\frac{2}{\gamma-1}(3-\gamma)(t+1) \int_{A(t)} p \mathrm{~d} x\\
&+2(t+1)\int_{A(t)} \rho^\delta u_x \mathrm{d}x-2(t+1)^2\int_{A(t)} \rho^\delta(u_x)^2 \mathrm{d}x,
\end{aligned}
\end{equation}
where we have used \eqref{bdc}.
By Cauchy's inequality,
\begin{equation}\label{51}
\begin{aligned}
&2(t+1)\int_{A(t)} \rho^\delta u_x \mathrm{d}x-2(t+1)^2\int_{A(t)} \rho^\delta(u_x)^2 \mathrm{d}x\\
\leq & -2(t+1)^2\int_{A(t)} \rho^\delta(u_x)^2 \mathrm{d}x +2(t+1)^2\int_{A(t)} \rho^\delta(u_x)^2 \mathrm{d}x+2\int_{A(t)} \rho^\delta \mathrm{d}x\\
=&2\int_{A(t)} \rho^\delta \mathrm{d}x\leq  M,
\end{aligned}
\end{equation}
the last inequality is due to  Lemma \ref{60}, Lemma \ref{53} and the boundness of $A^0$, where constant $M=M(A^0, C_0)>0$.

It follows from \eqref{50}-\eqref{51} that
\begin{equation}
\frac{\mathrm{d}}{\mathrm{d} t} I(t) \leq \frac{2(3-\gamma)}{\gamma-1}(t+1) \int_{A(t)} p \mathrm{~d} x+M.
\end{equation}
If $3-\gamma\leq 0,$ i.e. $\gamma \geq 3,$ then 
\begin{equation*}
\frac{\mathrm{d}}{\mathrm{d} t} I(t) \leq M,
\end{equation*}
which implies that
\begin{equation}\label{56}
I(t)\leq Mt+C.
\end{equation}
If $3-\gamma> 0,$ i.e. $ 1<\gamma <3,$ then it follows from definition of  $I(t)$ that
\begin{equation}\label{52}
\frac{\mathrm{d}}{\mathrm{d} t} I(t)\leq \frac{a}{t+1} I(t) +M,
\end{equation}
where $a=3-\gamma \in(1,2)$.
Multiplying $(1+t)^{-a}$ on both sides of \eqref{52}, one has
\begin{equation}\label{88}
  \frac{\mathrm{d}}{\mathrm{d}t}\left(I(t)(1+t)^{-a} \right)\leq M (1+t)^{-a}.
\end{equation}
Integrating \eqref{88} from 0 to $t$ leads to
\begin{equation}
\begin{aligned}
I(t)&\leq \Big( I(0)+\frac{M}{a-1}\Big)(t+1)^a\le C(t+1)^a .
\end{aligned}
\end{equation}
On the other hand, from the definition of $I(t)$, Lemma \ref{90} and Lemma \ref{53}, it holds that
\begin{equation}\label{55}
\begin{aligned}
I(t) & \geq \frac{2(t+1)^2}{\gamma-1}\left|A^0\right| \int_{A(t)}  \rho^\gamma(x, t) \frac{\mathrm{d} x}{|A(t)|} \\
&\geq \frac{2(t+1)^2}{\gamma-1}\Big(\frac{1}{|A(t)|}\int_{A(t)}\rho \mathrm{d}x\Big)^\gamma\\
& \geq \frac{2(t+1)^2}{\gamma-1}\left|A^0\right|^{1-\gamma} m(0)^\gamma \geq C_0(1+t)^2,
\end{aligned}
\end{equation}
where we used the fact in Lemma \ref{54} that
\begin{equation*}
m(t)=\int_{A(t)} \rho(x, t) \mathrm{d} x=\int_{A^0} \rho_0(x) \mathrm{d} x=m(0) .
\end{equation*}
Then $T_m<+\infty$ follows immediately, otherwise a contradiction forms between \eqref{55} and \eqref{56} or \eqref{52}.

\section{Acknowledgments}
 The research was supported in part by  National Natural Science Foundation of China under Grants 12371221, 12161141004 and 11831011.

The authors declare that they have no conflict of interest.

\end{document}